\documentclass[12pt,twoside,a4paper]{article}
\usepackage[english,catalan,galician,activeacute]{babel}
\usepackage[T1]{fontenc}
\usepackage{inputenc}
\usepackage{amsmath,bm,mathrsfs}
\usepackage{amssymb}
\usepackage{amsxtra}
\usepackage{amsthm}
\usepackage{epic}
\usepackage{eepic} 
\usepackage{fancybox}
\usepackage{boxedminipage}
\usepackage{calc}
\usepackage{ifthen}
\usepackage{paralist}
\usepackage{exscale,relsize}
\usepackage{pgf}
\usepackage[rm,sc,small,center]{titlesec}
\titlelabel{\thetitle.\enspace}
\usepackage{tikz}
\usetikzlibrary{arrows,automata,backgrounds,calendar,snakes}
\usepackage{inputenc}
\usepackage{color}

\setlength{\topmargin}{1cm}
\setlength{\headheight}{0.5cm}
\setlength{\headsep}{1cm}
\setlength{\topskip}{0cm}
\setlength{\textheight}{20cm}
\setlength{\textwidth}{14cm}             
\setlength{\oddsidemargin}{1.5cm}    
\setlength{\evensidemargin}{0.5cm}   
\setlength{\footskip}{1cm}
\setlength{\parindent}{0.5cm}
\pagestyle{myheadings}

\newtheorem{lemma}{Lemma}[section]

\newcommand{\R}{\mathbb{R}}

\newcommand{\mA}{\mathcal A}
\newcommand{\mH}{\mathcal H}
\newcommand{\mV}{\mathcal V}
\newcommand{\mR}{\mathcal R}
\newcommand{\bxi}{{\boldsymbol\xi}}

\newcommand{\be}{\begin{eqnarray}}
\newcommand{\ee}{\end{eqnarray}}
\newtheorem{prop}{Proposition}
\newtheorem{coro}{Corollary}
\begin{document}
\selectlanguage{english}
\thispagestyle{empty}
\numberwithin{equation}{section}
\pagestyle{myheadings}
\markboth{\small H. Freist\"uhler}{\small Linear Degeneracy of Second-Order Hyperbolic Systems}
\title{Linear Degeneracy in a Class of\\ Nonlinear Second-Order Hyperbolic Systems} 
\author{\it Heinrich Freist\"uhler\thanks{Department of Mathematics, University of Konstanz, 
78457 Konstanz, Germany; Supported by 
DFG Grant No.\ FR 822/11-1} 
}
\selectlanguage{english}
\date{June 1, 2024}
\maketitle
\newpage

\section{Formation of singularities in first-order systems}
A most prominent feature of quasilinear hyperbolic systems  
\be
V_t+A(V)V_x=0, 
\ee 
$A:\Omega\to\R^{n\times n}, \Omega\subset\R^n$,
is the property that many smooth solutions to them develop singularities in finite time. 
Such solutions exist whenever the system has a genuinely nonlinear mode, i.e., 
there is a smooth (eigenvalue,eigenvector) pair 
$(\lambda,R):\Omega^*\to\R\times\R^n \setminus\{0\}$,
\be\label{mode}
(-\lambda(V)I+A(V))R(V)=0,\quad V\in \Omega^*,
\ee
defined on an open neighborhood $\Omega^*\subset\Omega$ 
of some state
$V^*\in\Omega$ with  
\be\label{GNL}
(R(V)\cdot\nabla\lambda(V))|_{V=V^*}\neq 0.
\ee    
The condition \eqref{GNL} of \emph{genuine nonlinearity} goes back to Lax \cite{L57}, and first general results on 
singularity formation to him \cite{LSing}, John \cite{J}, and Liu \cite{Liu}.
For inhomogeneous systems 
\be
V_t+A(V)V_x=G(V), 
\ee 
with source $G:\Omega\to\R^n$, 
%
B\"arlin has recently shown \cite{B} that  {\it 
under the sole assumptions that
\be\label{EL}
G(V^*)=0
\ee
and 
\eqref{GNL} hold, there exist, for any Euclidean neighborhood $\Omega^*$ of $V^*$,
smooth data $V_0:\R\to \Omega^*$ such that the associated regular solution, while its values 
stay in $\Omega^*$, 
ceases to be differentiable after finite time.}   
\par\medskip
The present note has been prompted by the fact that this classical mechanism of singularity 
formation due to genuine nonlinearity will generically transfer to second-order hyperbolic systems.
However, we show that this is not the case for a class of systems of the special form
$$ 
B^{00}(U)U_{tt}+C^j(U)U_{tx^j}+B^{jk}(U)U_{x^jx^k}=H(U,U_t,U_x).
$$ 
Systems of this form have recently been introduced as models for dissipative relativistic fluid dynamics,
and an application of the main result of this paper -- Proposition 1 below -- is to provide an indication
that solutions to these models may avoid singularity formation, similarly to what is generically 
expected for the hyperbolic-parabolic models of dissipative fluid dynamics such as the  
Navier-Stokes-Fourier system for compressible fluids \cite{K}. 

\newpage

\section{Dispersion relations and modes}
We begin our study with a linear constant-coefficients system
\be\label{sos}
B^{00}U_{tt}+C^jU_{tx^j}+B^{jk}U_{x^jx^k}=0
\ee
for $U:[0,\infty)\times\R^d\to\R^n$,
where $B^ {00}, C^j, B^{jk}, j,k=1,...,d,$ are real $n\times n$-matrices.
Writing
$$
B(\xi_0,\bxi):=\xi_0^2B^{00}+\xi_0C(\bxi)+B(\bxi),\quad \bxi=(\xi_1,...,\xi_d),
$$
with $C(\bxi)=C^j\xi_j, B(\bxi)=B^{jk}\xi_j\xi_k$,
we assume that \eqref{sos} is {\it semi-strictly definitely hyperbolic} in the sense that
\begin{itemize}
 \item $B^{00}<0$,
 \item for any $\bxi\in S^{d-1}$, the polynomial
    $$\xi_0\mapsto p^\bxi(\xi_0):=\det(B(\xi_0,\bxi))$$
    of degree $2n$
    has exclusively real non-vanishing zeroes $\xi_0$, 
    whose multiplicities do not depend on $\bxi$,
    and  
 \item for any such zero of multiplicity $\nu$\!, the {\it amplitude space}
    $$X(\xi_0,\bxi)=\ker B(\xi_0,\bxi)$$ has dimension~$\nu$. 
\end{itemize}
Note that the equation 
\be\label{drsos}
p^\bxi(\xi_0)=0
\ee
is the dispersion relation of system \eqref{sos}. 
\par\bigskip
With $U_t=:P, U_{x_j}=:Q_j$, system \eqref{sos} implies
\be\label{fos}
\begin{aligned}
B^{00}P_t+C^j P_{x^j}+B^{jk}(Q_j)_{x^k}&=0\\
(Q_j)_t-P_{x^j}&=0, 
\end{aligned}
\ee
which we write as
\be\label{fosm}
\begin{pmatrix}
B^{00}&&&\\
&I&& \\
&&\ddots&\\
&&&I 
\end{pmatrix}
\begin{pmatrix}
P\\ Q_1\\ \vdots\\ Q_d 
\end{pmatrix}_t
+
\begin{pmatrix}
C^k&B^{1k}&\cdots&B^{dk}\\
-\delta_1^kI&0&\cdots&0\\
\vdots&\vdots&\ddots&\vdots\\
-\delta_d^kI&0&\cdots&0
\end{pmatrix}
\begin{pmatrix}
P\\ Q_1\\ \vdots\\ Q_d 
\end{pmatrix}_{x^k}
=0
\ee
or briefly
\be\label{fosb}
A^0V_t+A^kV_{x^k}=0.
\ee

\newpage

\begin{lemma}
(i) The dispersion relation of \eqref{fos} is
\be\label{qp}
0=q^\bxi(\xi_0):=\xi_0^{(d-1)n}p^\bxi(\xi_0)
\ee
(ii) The non-zero modes $(\lambda(\bxi),R(\bxi)),\bxi\in S^{d-1},$ of the first-order formulation 
\eqref{fos}/\eqref{fosm}/\eqref{fosb}, i.e., the pairs $(\lambda(\bxi),R(\bxi))$  
satisfying $\lambda(\bxi)\neq 0$ and 
$$
\{0\}\neq R(\bxi)=\ker (-\lambda(\bxi) A^0+A(\bxi)),\quad A(\bxi)\equiv A^k\xi_k,
$$
are given by  
$$
(\lambda(\bxi),R(\bxi))=(-\xi_0,\bar X(\xi_0,\bxi)),\quad\bxi\in S^{d-1},
$$
with $\xi_0$ a zero of $p^\bxi$ and
$$
\bar X(\xi_0,\bxi)=\{\begin{pmatrix}
                     \xi_0x\\\xi_1x\\\vdots\\\xi_dx 
                     \end{pmatrix}:x\in X(\xi_0,\bxi)
                    \}.
$$
(iii) The zero mode is $(0,R_0(\bxi))$ with some $R(\bxi)$ of dimension $(d-1)n$.
\end{lemma}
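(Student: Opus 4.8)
The plan is to exploit the explicit block structure of the pencil
$$M(\lambda,\bxi):=-\lambda A^0+A(\bxi),$$
whose top-left $n\times n$ block is $-\lambda B^{00}+C(\bxi)$, whose first block-row is completed by the blocks $B^{jk}\xi_k$ ($j=1,\dots,d$), whose first block-column is completed by the blocks $-\xi_iI$ ($i=1,\dots,d$), and whose remaining $dn\times dn$ diagonal block is $-\lambda I$. All three parts then reduce to manipulating this structure against the second-order symbol $B(\xi_0,\bxi)$, under the identification $\lambda=-\xi_0$.

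For (i) I would compute $\det M(\lambda,\bxi)$ by block Gaussian elimination. Away from $\lambda=0$ the lower-right block $-\lambda I_{dn}$ is invertible, so a Schur-complement factorisation gives
$$\det M=(-\lambda)^{dn}\,\det\!\Big(-\lambda B^{00}+C(\bxi)-\tfrac1\lambda B(\bxi)\Big),$$
the cross term collapsing, via $\sum_j(B^{jk}\xi_k)\xi_j=B(\bxi)$, to a single copy of $\tfrac1\lambda B(\bxi)$. Substituting $\lambda=-\xi_0$ and pulling one factor $\xi_0$ through the $n\times n$ determinant turns the inner matrix into $\xi_0^{-1}B(\xi_0,\bxi)$, whence $\det M=\xi_0^{dn}\cdot\xi_0^{-n}p^\bxi(\xi_0)=\xi_0^{(d-1)n}p^\bxi(\xi_0)=q^\bxi(\xi_0)$. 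Since both sides are polynomials in $\lambda$ agreeing for all $\lambda\neq0$, the identity holds identically; this is (i), with $\xi_0=-\lambda$.

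For (ii) I would solve $M(\lambda,\bxi)R=0$ directly. Writing $R=(P,Q_1,\dots,Q_d)$, the lower block-rows read $-\xi_iP-\lambda Q_i=0$; for $\lambda=-\xi_0\neq0$ (nonvanishing by hyperbolicity) these force $Q_i=(\xi_i/\xi_0)P$. Inserting into the top block-row and clearing the denominator yields exactly $B(\xi_0,\bxi)P=0$, i.e.\ $P\in X(\xi_0,\bxi)$. Rescaling $P=\xi_0x$ gives $Q_i=\xi_ix$, which is the asserted form of $\bar X(\xi_0,\bxi)$; the map $x\mapsto(\xi_0x,\dots,\xi_dx)$ is injective because $\xi_0\neq0$, so $\dim\bar X=\dim X=\nu$, and by (i) the nonzero roots of $q^\bxi$ are precisely the roots of $p^\bxi$, so these modes exhaust the nonzero spectrum.

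For (iii), at $\lambda=0$ the pencil is $A(\bxi)$, whose lower block-rows now read $-\xi_iP=0$; since $\bxi\in S^{d-1}$ some $\xi_i\neq0$, so $P=0$, and the top row reduces to $\sum_j(B^{jk}\xi_k)Q_j=0$. The eigenspace is therefore $\{(0,Q):\sum_j(B^{jk}\xi_k)Q_j=0\}$, already of the form $(0,R_0(\bxi))$. The one step needing an extra argument — the part I expect to be the main obstacle — is the dimension count: it requires the linear map $(Q_1,\dots,Q_d)\mapsto\sum_j(B^{jk}\xi_k)Q_j$ from $\R^{dn}$ to $\R^n$ to have full rank $n$. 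I would get this from the observation that $B(\bxi)v=\sum_j(B^{jk}\xi_k)(\xi_jv)$ lies in the image for every $v$, and that $B(\bxi)$ is invertible — indeed $p^\bxi(0)=\det B(\bxi)\neq0$, since $0$ is not a zero of $p^\bxi$ — so the image is all of $\R^n$. Hence the eigenspace has dimension $dn-n=(d-1)n$, which is (iii).
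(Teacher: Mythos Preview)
Your argument is correct and, for (i) and (ii), matches the paper's: Schur-complement reduction of the block determinant (the paper eliminates one block-row at a time rather than all at once, but it is the same computation), then direct solution of the kernel equations. For (iii) the paper takes the dual route, identifying the \emph{left} kernel of $A(\bxi)$ as $\{(0,l_1,\dots,l_d):\sum_k\xi_k l_k=0\}$ and reading off its dimension $(d-1)n$; your right-kernel argument via surjectivity of $(Q_1,\dots,Q_d)\mapsto\sum_j(B^{jk}\xi_k)Q_j$ is the equivalent computation on the other side, and in fact makes the use of $\det B(\bxi)=p^{\bxi}(0)\neq0$ explicit where the paper leaves it implicit.
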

\begin{proof}
(i) With $d>1$ one has
\be
\begin{aligned}
q^\bxi(\xi_0)=&\det(A^\alpha\xi_\alpha)=
\left|
\begin{matrix}
B^{00}\xi_0+C^k\xi_k&B^{1k}\xi_k&\cdots&B^{dk}\xi_k\\
-\xi_1 I&\xi_0I&\cdots&0\\
\vdots&\vdots&\ddots&\vdots\\
-\xi_d I&0&\cdots&\xi_0I\\
\end{matrix}
\right|\\
=&
\det(\xi_0I)
\left|
\begin{matrix}
B^{00}\xi_0+C^k\xi_k+\xi_0^{-1}\xi_d\xi_kB^{dk}&B^{1k}\xi_k&\cdots&B^{(d-1)k}\xi_k\\
-\xi_1 I&\xi_0I&\cdots&0\\
\vdots&\vdots&\ddots&\vdots\\
-\xi_{d-1} I&0&\cdots&\xi_0I
\end{matrix}
\right|\\
=&\hdots=
(\det(\xi_0I))^d
\left|
\begin{matrix}
B^{00}\xi_0+C^k\xi_k+\xi_0^{-1}\xi_j\xi_kB^{jk}
\end{matrix}
\right|\\
=&\xi_0^{dn}\det(\xi_0^{-1}B(\xi_0,\bxi))\\
=&\xi_0^{(d-1)n}p^\bxi(\xi_0)
\end{aligned}
\ee
thus \eqref{qp} for $\xi_0\neq0$, and by continuity also for $\xi_0=0$.
$d=1$ 
analogously.\par\medskip\noindent
(ii) Obviously,
\be\label{eqker}
\begin{pmatrix}
B^{00}\xi_0+C^k\xi_k&B^{1k}\xi_k&\cdots&B^{dk}\xi_k\\
-\xi_1 I&\xi_0I&\cdots&0\\
\vdots&\vdots&\ddots&\vdots\\
-\xi_d I&0&\cdots&\xi_0I\\
\end{pmatrix}
\begin{pmatrix}
\bar x_0\\\bar x_1\\\vdots\\\bar x_d
\end{pmatrix}
=
\begin{pmatrix}
0\\0\\\vdots\\0 
\end{pmatrix}
\ee
is equivalent to $\bar x_\alpha=\xi_\alpha x$ with some $x\in X(\xi_0,\bxi)$.
\par
\goodbreak
(iii) The left kernel of $A(\bxi)$, i.e., $\{(0,l_1,...,l_d)\in\R^{(d+1)n}:\xi^kl_k=0\}$ has 
dimension $(d-1)n$, and thus also the right kernel $R(\bxi)$.
\end{proof}
\section{Quasisemilinear second-order systems}
From now on we consider nonlinear systems of the form 
\be\label{sosn}
B^{00}(U)U_{tt}+C^j(U)U_{tx^j}+B^{jk}(U)U_{x^jx^k}=H(U,U_t,U_x),
\ee
i.e., the differential operator is linear in the highest, second-order, derivatives, 
with coefficients depending only on the unknown functions itself, not its first-order derivatives.   
The unknown/solution $U$ ranges in some open subset $\mathcal U$ of $\R^n$.

We assume that the operator on the left-hand side of \eqref{sosn} is {\it pointwise  
semi-strictly definitely hyperbolic}, in the sense that every linear operator 
$$
U\mapsto 
B^{00}(U^*)U_{tt}+C^j(U^*)U_{tx^j}+B^{jk}(U^*)U_{x^jx^k},\quad U^*\in\mathcal U,
$$
is semi-strictly definitely hyperbolic
\par\bigskip
With again $Q=U_x$, system \eqref{sosn} has the first-order counterpart
\be\label{fosn}
\begin{aligned}
B^{00}(U)P_t+C^j(U)P_{x^j}+B^{jk}(U)(Q_j)_{x^k}&=H(U,P,Q)\\
(Q_j)_t-P_{x^j}&=0\\
U_t&=P,
\end{aligned}
\ee
which we write as
\be\label{fosnb}
\mA^0(\mV)\mV_t+\mA^k(\mV)\mV_{x^k}=\mH(\mV).
\ee
with
$$
\mV=(P,Q_1,...,Q_d,U),
$$
\small
$$ 
\mA^0(\mV)=
\begin{pmatrix}
B^{00}(U)&&&&\\
&I&&& \\
&&\ddots&&\\
&&&I&\\
&&&&I
\end{pmatrix},
\mA^k(\mV)=
\begin{pmatrix}
C^k(U)&B^{1k}(U)&\cdots&B^{dk}(U)&0\\
-\delta_1^kI&0&\cdots&0&0\\
\vdots&\vdots&\ddots&\vdots&\vdots\\
-\delta_d^kI&0&\cdots&0&0\\
0&0&\cdots&0&0
\end{pmatrix}
$$
\normalsize
and 
$$
 \mH(\mV)=(H(U,P,Q),0,...,0,P).
$$ 
The modes of \eqref{fosn}, i.e., pairs $(\lambda(\mV,\bxi),\mathcal R(\mV,\bxi))$ with
$$
\{0\}\neq \mR(\mV,\bxi)=\ker (-\lambda(\mV,\bxi) \mA^0(\mV)+\mA(\mV,\bxi)),
\quad 
\mA(\mV,\bxi)\equiv \mA^k(\mV)\xi_k,
$$
now depend on the state $\mV$, with $\mR(\mV,\bxi)$ consisting of all solutions 
$(\bar x_0,\bar x_1,...,\bar x_d,y)\in\R^{(d+1)n}$ of 
\be\label{eqkern} 
\begin{pmatrix}
B^{00}(U)\xi_0+C^k(U)\xi_k&B^{1k}(U)\xi_k&\cdots&B^{dk}(U)\xi_k&0\\
-\xi_1 I&\xi_0I&\cdots&0&0\\
\vdots&\vdots&\ddots&\vdots&\vdots\\
-\xi_d I&0&\cdots&\xi_0I&0\\
0&0&\cdots&0&\xi_0I
\end{pmatrix}
\begin{pmatrix}
\bar x_0\\\bar x_1\\\vdots\\\bar x_d\\y
\end{pmatrix}
=
\begin{pmatrix}
0\\0\\\vdots\\0\\0 
\end{pmatrix}
\ee
with $-\lambda=\xi_0$ equal to $0$ or a zero of $p^\bxi_U=\ker B(U,.,\bxi)$.  

In view of the close proximity of \eqref{eqkern} and \eqref{eqker},
the non-zero modes are of the form 
\be\label{nonzeromode}
\mR(\mV,\bxi)=R(U,\bxi)\times\{0\}, 
\ee
while 
$$
\mR_0(\mV,\bxi)=(R_0(U,\bxi)\times\{0\})+(\{0\}\times\R^n).
$$
Following Lax \cite{L57}, a smooth mode $(\lambda(\mV,\bxi),\mR(\mV,\bxi))$ of a hyperbolic system 
\eqref{fosnb} is called {\it linearly degenerate} if the directional derivative, in state space,
of the characteristic speed $\lambda$ with respect to the state $\mV$ vanishes along the associated 
amplitude space $\mR(\mV,\bxi)$.
$$
\mR(\mV,\bxi)\cdot\frac\partial{\partial \mV}(\lambda(\mV,\bxi))=0.
$$
The point of this note is the following.
\begin{prop}
All modes of the first order formulation \eqref{fosn} of the quasi\-semilinear system 
\eqref{sosn} are linearly degenerate.
\end{prop}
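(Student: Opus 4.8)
The plan is to exploit two structural features already exposed in the preceding analysis: the characteristic speed $\lambda$ depends on the full state $\mV=(P,Q_1,\dots,Q_d,U)$ only through its last block $U$, whereas the amplitude spaces of the non-zero modes live entirely in the complementary blocks. Linear degeneracy then follows from pairing two objects supported on disjoint slots.

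First I would record the state-dependence of $\lambda$. By the preceding lemma and \eqref{eqkern}, a non-zero mode has $\lambda(\mV,\bxi)=-\xi_0$ with $\xi_0$ a root of $p^\bxi_U(\xi_0)=\det B(U,\xi_0,\bxi)$, where $B(U,\xi_0,\bxi)=\xi_0^2B^{00}(U)+\xi_0C(U,\bxi)+B(U,\bxi)$ and $C(U,\bxi)=C^j(U)\xi_j$, $B(U,\bxi)=B^{jk}(U)\xi_j\xi_k$. Since the coefficient matrices $B^{00},C^j,B^{jk}$ depend on the state only through $U$, so does the dispersion polynomial, and hence $\lambda=\lambda(U,\bxi)$ is independent of $P,Q_1,\dots,Q_d$. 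Its state-space gradient therefore has the block form
$$
\frac{\partial\lambda}{\partial\mV}=\Bigl(0,\dots,0,\frac{\partial\lambda}{\partial U}\Bigr),
$$
supported entirely in the $U$-slot.

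Next I would pair this gradient with the amplitude vectors. For a non-zero mode \eqref{nonzeromode} gives $\mR(\mV,\bxi)=R(U,\bxi)\times\{0\}$, so every $r\in\mR$ has the form $r=(\bar x_0,\dots,\bar x_d,0)$ with vanishing $U$-block; thus
$$
r\cdot\frac{\partial\lambda}{\partial\mV}=0\cdot\frac{\partial\lambda}{\partial U}=0,
$$
which is exactly the condition of linear degeneracy. For the zero mode the speed $\lambda\equiv0$ is constant in $\mV$, so $\partial\lambda/\partial\mV=0$ and degeneracy holds trivially, even though $\mR_0$ does contain the block $\{0\}\times\R^n$ with nontrivial $U$-component.

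The delicate point is bookkeeping rather than estimation: one must verify that the $U$-slot carrying $\partial\lambda/\partial U$ is precisely the slot annihilated by the non-zero amplitude vectors, and that the extra equation $U_t=P$ appended in \eqref{fosn}---which is what introduces the $U$-block and the zero mode in the first place---does not couple $\lambda$ to $P$ or the $Q_j$. Once this block separation is made explicit, the proposition is immediate; its entire content is the observation that the quasisemilinear structure confines the state-dependence of every characteristic speed to the block on which the amplitudes vanish.
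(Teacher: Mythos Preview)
Your argument is correct and is exactly the paper's own proof, spelled out in more detail: the zero mode is trivially linearly degenerate because $\lambda\equiv 0$, while for the non-zero modes you combine \eqref{nonzeromode} with the observation that $\lambda(\cdot,\bxi)$ depends on $\mV$ only through $U$.
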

\begin{proof}
For the zero mode, the linear degeneracy trivial. For the non-zero modes it follows from 
\eqref{nonzeromode} and the fact that for every $\bxi\in S^{d-1}$, 
$\lambda(.,\bxi)$ is a function of $U$ alone. 
\end{proof}
\begin{coro}
The first-order versions of all models of dissipative relativistic fluid dynamics considered in 
\cite{FT14,FT17,FT18,BDN18,BDN19,FS24} have exclusively linearly degenerate modes.     
\end{coro}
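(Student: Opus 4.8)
The plan is simply to apply Proposition 1 to each cited model in turn. Proposition 1 asserts that \emph{every} mode of the first-order formulation of \emph{any} quasisemilinear second-order system of the form \eqref{sosn} is linearly degenerate, provided the second-order operator is pointwise semi-strictly definitely hyperbolic. Consequently the corollary reduces to a structural verification: for each of \cite{FT14,FT17,FT18,BDN18,BDN19,FS24} one must exhibit a choice of state variable $U\in\mathcal U$ and matrices $B^{00}(U),C^j(U),B^{jk}(U)$ together with a source $H(U,U_t,U_x)$ that recast the governing equations in the form \eqref{sosn}, and then check that the resulting principal operator satisfies the three conditions of pointwise semi-strict definite hyperbolicity.

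First I would, for each reference, read off the unknown $U$ (the fluid fields together with the dissipative variables) and isolate the second-order part of the equations. The decisive point is that this part be \emph{quasisemilinear}: linear in $U_{tt},U_{tx^j},U_{x^jx^k}$ with coefficients depending on $U$ alone. In the relativistic dissipative models the highest-order terms originate from space-time derivatives of constitutive fluxes that are themselves algebraic functions of $U$; hence $B^{00},C^j,B^{jk}$ depend on $U$ but not on $U_t$ or $U_x$, and all remaining terms may be absorbed into $H$. Any residual first-derivative dependence in a top-order coefficient would violate the form \eqref{sosn} and must be excluded by inspection of the explicit equations of each model.

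Second, I would verify the hyperbolicity hypotheses stated before \eqref{drsos}: that $B^{00}(U)<0$ at every admissible state, that for each $\bxi\in S^{d-1}$ the characteristic polynomial $\xi_0\mapsto p^\bxi(\xi_0)=\det(B(\xi_0,\bxi))$ has only real, non-vanishing roots with $\bxi$-independent multiplicities, and that each amplitude space $X(\xi_0,\bxi)$ has dimension equal to the corresponding multiplicity. For these models the characteristic speeds and their multiplicities have already been determined in the respective papers as part of their causality and hyperbolicity analyses, so this step amounts to matching those computations against the three bullet conditions. With both checks in place, Proposition 1 applies verbatim to the first-order formulation \eqref{fosn} and yields the claim.

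The hard part is the model-by-model structural check rather than any new analysis. The different references employ different sets of variables and different formulations, so some systems will require a preliminary invertible change of state variables before they assume the canonical quasisemilinear form \eqref{sosn}; and in each case one must confirm both that no second-order coefficient secretly depends on first-order derivatives and that the spectrum is genuinely real with multiplicities constant across all directions $\bxi$. Once a given model clears these verifications, linear degeneracy of all of its modes is automatic.
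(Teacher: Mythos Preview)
Your proposal is correct and follows exactly the route the paper intends: the corollary is stated there without proof, as an immediate consequence of Proposition~1 once one knows that the cited models are quasisemilinear of the form \eqref{sosn} with the required hyperbolicity. Your write-up simply makes explicit the structural verifications that the paper leaves implicit.
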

Could it be that solutions to any of these models for smooth data generally avoid singularity 
formation?

\newpage

\small

\end{document}